\numberwithin{equation}{section}
\newtheorem{thm}{Theorem}[section]
\newtheorem{lem}[thm]{Lemma}
\newtheorem{cor}[thm]{Corollary}
\newtheorem{defin}[thm]{Definition}
\newtheorem{rem}[thm]{Remark}
\def\id{\mathop{\rm id}}
\def\id{{\bf 1}\!\!{\rm I}}
\newcommand{\be}{\begin{equation}}
\newcommand{\ee}{\end{equation}}
\newcommand{\bes}{\begin{equation*}}
\newcommand{\ees}{\end{equation*}}
\newcommand{\bea}{\begin{eqnarray}}
\newcommand{\eea}{\end{eqnarray}}
\newcommand{\ba}{\begin{array}}
\newcommand{\ea}{\end{array}}
\newcommand{\bc}{\begin{center}}
\newcommand{\ec}{\end{center}}
\def\ba{{\mathbb A}}
\def\bc{{\mathbb C}}
\def\bn{{\mathbb N}}
\def\br{{\mathbb R}}
\def\l{\lambda}
\def\i{\varepsilon}
\def\f{\varphi}
\def\N{\mathbb{N}}
\def\a{\alpha}
\def\b{\beta}
\def\m{\mu}
\def\d{\delta}
\def\g{\gamma}
\def\ck{\mathcal{K}}
\begin{document}
\title[ergodicity coefficient and weak ergodicity]
{ergodic properties of nonhomogeneous Markov chains defined on
ordered Banach spaces with a base}

\author{Farrukh Mukhamedov}
\address{Farrukh Mukhamedov\\
 Department of Computational \& Theoretical Sciences\\
Faculty of Science, International Islamic University Malaysia\\
P.O. Box, 141, 25710, Kuantan\\
Pahang, Malaysia} \email{{\tt far75m@yandex.ru}, {\tt
farrukh\_m@iium.edu.my}}

\begin{abstract}
It is known that the Dobrushin's ergodicity coefficient is one of
the effective tools to study a behavior of non-homogeneous Markov
chains. In the present paper, we define such an ergodicity
coefficient of a positive mapping defined on ordered Banach space
with a base (OBSB), and study its properties. In terms of this
coefficient we prove the equivalence uniform and weak ergodicities
of homogeneous Markov chains. This extends earlier results obtained
in case of von Neumann algebras. Such a result allowed to establish
a category theorem for uniformly ergodic Markov operators. We find
necessary and sufficient conditions for the weak ergodicity of
nonhomogeneous discrete Markov chains (NDMC). It is also studied
$L$-weak ergodicity of NDMC defined on OBSB. We establish that the
chain satisfies $L$-weak ergodicity if and only if it satisfies a
modified Doeblin's condition ($\frak{D}_1$-condition). Moreover,
several nontrivial examples of NDMC which satisfy the
$\frak{D}_1$-condition are provided. \\[2mm]
{\it Keywords:} coefficient of ergodicity, weak ergodicity, $L$-weak
ergodicity; nonhomogeneous Markov chain, norm ordered space; Doeblin's condition;\\
{\it AMS Subject Classification:} 47A35, 28D05.
\end{abstract}

\maketitle


\section{Introduction}

It is well-known that the transition probabilities play crucial role
in the theory of Markov processes. Using such probabilities one can
define a linear operator, which is called {\it Markov operator}
acting on $L^1$-spaces. In the study of several ergodic properties
of the Markov process, the investigation of asymptotical behaviors
of iterations of the Markov operators plays an important role (see
\cite{B,K,Sz}). In last years, the study of quantum dynamical
systems has had an impetuous growth due to natural applications of
quantum dynamical systems to various fields of mathematics and
physics. Therefore, ergodic properties of Markov operators were also
investigated by many authors. The reader is referred e.g. to
\cite{AH,FR1,FR2,J1,NSZ,S} for further details relative to some
differences between the classical and the quantum situations. On the
other hand, to study several properties of physical and
probabilistic processes in abstract framework is convenient and
important. Some applications of this scheme in quantum information
have been discussed in \cite{RKW}. One can see that the classical
and quantum cases confine to this scheme. In this abstract scheme
one considers norm ordered spaces (see \cite{Alf}). Mostly, in those
investigations homogeneous Markov processes were considered (see
\cite{BK1,BK2,EW1,J1,L,RKW,SZ}). We should stress that a limited
number of papers (see \cite{Ber}) were devoted to the investigations
of ergodic properties of nonhomogeneous Markov processes in the
mentioned scheme. Therefore, in this paper we study more deeper the
nonhomogeneous Markov processes in the abstract framework.

Note that due to nonhomogenety of the process the investigations of
limiting behavior of such processes becomes very complicated. One of
the important tools in this study is so-called the Dobrushin's
ergodicity coefficient \cite{D,MI,IS,T}. A lot of papers were
devoted to the investigation of ergodicity of nonhomogeneous Markov
chains (see, for example \cite{D}-\cite{JI},\cite{Se,ZI}).

As we said before that the Dobrushin's ergodicity coefficient is one
of the effective tools to study a behavior of such products (see
\cite{IS} for review). Therefore, in section 3, we will define such
an ergodicity coefficient $\d(T)$ of a positive mapping $T$ defined
on ordered Banach space with a base, and study its properties. Note
that such a coefficient has been independently defined in \cite{GQ}.
But, its useful properties were not provided. In terms of this
coefficient we prove the equivalence uniform and weak ergodicities
of homogeneous Markov chain. This extends earlier results obtained
in \cite{BK1,M1} in case of von Neumann algebras. Such a result
allowed to establish a category theorem for uniformly ergodic Markov
operators. We point out that in the setting of general stochastic
operators on classical $L^1$-spaces similar kind of result was
originally discussed in \cite{Iw} and further refinements due to W.
Bartoszek \cite{B0}. We remark that initially the question on the
geometric structure of the set of uniformly ergodic operators its
size and category is rooted in \cite{Hal}. Further in section 4, we
find necessary and sufficient conditions for the weak ergodicity of
nonhomogeneous discrete Markov chains (NDMC), which extend the
results of \cite{MI,M1} to an abstract scheme. Note that in
\cite{DP} similar conditions were found for classical nonhomogeneous
Markov processes to satisfy weak ergodicity. In section 5, we study
$L$-weak ergodicity of NDMC defined on ordered Banach spaces with a
base. It is established that the chain satisfies $L$-weak ergodicity
if and only if it satisfies a modified Doeblin's condition
($\frak{D}_1$-condition). Note that in case of classical
$L^1$-spaces a similar kind of result has been proved in \cite{M2}.
In the final section 6, we provide several nontrivial examples of
NDMC which satisfy the $\frak{D}_1$-condition.

\section{Preliminaries}

Let $X$ be an ordered vector space with a cone $X_+=\{x\in X: \
x\geq 0\}$. A subset $\ck$ is called a {\it base} for $X$, if one
has $\ck=\{x\in X:\ f(x)=1\}$ for some strictly positive (i.e.
$f(x)>0$ at $x>0$) linear functional $f$ on $X$. An ordered vector
space $X$ with generating cone $X_+$ (i.e. $X=X_+-X_+$) and a fixed
base $\ck$, defined by a functional $f$, is called {\it an ordered
vector space with a base} \cite{Alf}. In what follows, we denote it
as $(X,X_+,\ck,f)$. Let $U$ be the convex hull of the set
$\ck\cup-\ck$, and let
$$
\|x\|_{\ck}=\inf\{\l\in\br_+:\ x\in\l U\}.
$$
Then one can see that $\|\cdot\|_{\ck}$ is a seminorm on $X$.
Moreover, one has $\ck=\{x\in X_+: \ \|x\|_{\ck}=1\}$,
$f(x)=\|x\|_{\ck}$ for $x\in X_+$. If the set $U$ is linearly
bounded (i.e. for any line $\ell$ the intersection $\ell\cap U$ is a
bounded set), then $\|\cdot\|_{\ck}$ is a norm, and in this case
$(X,X_+,\ck,f)$ is called {\it an ordered norm space with a base}.
When  $X$ is complete with respect to the norm $\|\cdot\|_{\ck}$ and
the cone $X_+$ is closed, then $(X,X_+,\ck,f)$ is called {\it an
ordered Banach space with a base (OBSB)}. In the sequel, for the
sake of simplicity instead of $\|\cdot\|_{\ck}$ we will use usual
notation $\|\cdot\|$.

Let us provide some examples of OBSB.

\begin{itemize}
\item[1.] Let $E$ be a order-unit normed space. Then the conjugate space
$E^*$ is OBSB (see \cite{Alf}).

\item[2.] Let $M$ be a von Neumann algebra. Let $M_{h,*}$ be the
Hermitian part of the predual space $M_*$ of $M$. As a base $\ck$ we
define the set of normal states of $M$. Then
$(M_{h,*},M_{*,+},\ck,\id)$ is a OBSB, where $M_{*,+}$ is the set of
all positive functionals taken from $M_*$, and $\id$ is the unit in
$M$.

\item[3.] Let $X=\ell_p$, $1<p<\infty$. Define
$$
X_+=\bigg\{\mathbf{x}=(x_0,x_1,\dots,x_n,\dots)\in\ell_p: \ x_0\geq
\bigg(\sum_{i=1}^\infty|x_i|^p\bigg)^{1/p}\bigg\}
$$
and $f_0(\mathbf{x})=x_0$. Then $f_0$ is a strictly positive linear
functional. In this case, we define $\ck=\{x\in X_+: \
f_0(\mathbf{x})=1\}$. Then one can see that $(X,X_+,\ck,f_0)$ is a
OBSB. Note that the norm $\|\cdot\|_{\ck}$ is equivalent to the
usual $\ell_p$-norm.
\end{itemize}

Let $(X,X_+,\ck,f)$ be an OBSB. It is well-known (see
\cite[Proposition II.1.14]{Alf}) that every element $x$ of OBSB
admits a decomposition $x=y-z$, where $y,z\geq 0$ and
$\|x\|=\|y\|+\|z\|$.

Let $(X,X_+,\ck,f)$ be an OBSB. A linear operator $T:X\to X$ is
called positive, if $Tx\geq 0$ whenever $x\geq 0$. A positive linear
operator $T:X\to X$ is called {\it Markov}, if $T(\ck)\subset\ck$.
It is clear that $\|T\|=1$ and its adjoint mapping $T^*: X^*\to X^*$
acts in ordered Banach space $X^*$ with unit $f$, and moreover, one
has $T^*f=f$. Note that in case of $X=\br^n$, $X_+=\br_+^n$ and
$\ck=\{(x_i)\in\br^n: \ x_i\geq 0, \ \sum_{i=1}^n x_i=1\}$, then for
any Markov operator $T$ acting on $\br^n$, the operator $T^*$
coincides with usual stochastic matrix. Now for each $y\in X$ we
define a linear operator $T_y: X\to X$ by $T_y(x)=f(x)y$.
 Recall that a family of Markov operators
$\{T^{m,n}: X\to X\}$ ($m\leq n$, $m,n\in\N$) is called a
\textit{nonhomogeneous discrete Markov
 chain (NDMC)} if
$$
T^{m,n}=T^{k,n}T^{m,k}
$$
for every $m\leq k\leq n$. A NDMC  $\{T^{m,n}\}$ is called {\it
uniformly asymptotically stable} or {\it uniformly ergodic} if there
exist an element $y_0\in X$ such that
$$
\lim_{n\to\infty}\|T^{m,n}-T_{y_0}\|=0
$$
for any $m\geq 0$.

Recall that if for a NDMC $\{T^{k,m}\}$ one has
$T^{k,m}=(T^{0,1})^{m-k}$, then such a chain becomes {\it
homogeneous}. In what follows, by $\{T^n\}$ we denote homogeneous
Markov chain, where $T:=T^{0,1}$.

\section{Dobrushin ergodicity coefficient}

Let $(X,X_+,\ck,f)$ be an OBSB  and $T:X\to X$ be a linear bounded
operator. Define
$$
N=\{x\in X: \ f(x)=0\},
$$
\be \label{db} \d(T)=\sup_{x\in N,\ x\neq 0}\frac{\|Tx\|}{\|x\|}.
\ee

The magnitude $\d(T)$ is called the \textit{Dobrushin ergodicity
coefficient} of $T$.

\begin{rem} We note that if $X^*$ is a commutative algebra, the notion of
the Dobrushin ergodicity coefficient was studied in
\cite{C},\cite{D},\cite{ZZ}. In a noncommutative setting, i.e. when
$X^*$ is a von Neumann algebra, such a notion was introduced in
\cite{M,MTA}. We should stress that such a coefficient has been
independently defined in \cite{GQ}. Furthermore, for particular
cases, i.e. in a noncommutative setting, such a coefficient
explicitly has been calculated for quantum channels (i.e. completely
positive maps). But, its useful properties were not provided. Below,
we will prove several important properties of the defined
coefficient.
\end{rem}

Before formulating the main result of this section we need the
following auxiliary result.

\begin{lem}\label{3.2} For every $x,y\in X$ such that $x-y\in N$ there
exist $u,v\in \ck$ with
$$
x-y=\frac{\|x-y\|}{2}(u-v).
$$
\end{lem}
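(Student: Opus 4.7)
The plan is to apply the decomposition property stated just before the definition of Markov operator, namely that any element of an OBSB can be written as a difference of positive elements whose norms add up to the norm of the original element.

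Set $z = x-y$, so $z \in N$ and $f(z) = 0$. First I would dispose of the trivial case $z = 0$ by picking any fixed element of $\ck$ for both $u$ and $v$; since $\ck$ is the base of the cone it is nonempty, and the identity $0 = \tfrac{0}{2}(u-v)$ is automatic. Assume from now on that $z \neq 0$, equivalently $\|z\| > 0$.

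Next, invoke the cited Proposition II.1.14 from Alfsen to write $z = z_+ - z_-$ with $z_+, z_- \in X_+$ and $\|z\| = \|z_+\| + \|z_-\|$. The key observation is that on $X_+$ the norm coincides with $f$, so $\|z_+\| = f(z_+)$ and $\|z_-\| = f(z_-)$. Combined with $f(z) = 0$ this yields
\[
 f(z_+) = f(z_-), \qquad \text{hence} \qquad \|z_+\| = \|z_-\| = \frac{\|z\|}{2}.
\]
In particular both $\|z_\pm\|$ are strictly positive.

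Finally, define
\[
 u := \frac{z_+}{\|z_+\|}, \qquad v := \frac{z_-}{\|z_-\|}.
\]
Because $z_\pm \in X_+$ and $f(z_\pm)/\|z_\pm\| = 1$, the characterization $\ck = \{x \in X_+ : \|x\| = 1\}$ recorded in the preliminaries gives $u, v \in \ck$. Substituting back,
\[
 x - y = z = z_+ - z_- = \frac{\|z\|}{2}\, u - \frac{\|z\|}{2}\, v = \frac{\|x-y\|}{2}(u-v),
\]
which is the required representation. The only real content is the equality $\|z_+\| = \|z_-\|$, and I don't anticipate a serious obstacle beyond checking that one may legitimately use the norm-additive decomposition together with the identification $f|_{X_+} = \|\cdot\|$; both were explicitly supplied in the preliminary section.
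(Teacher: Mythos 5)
Your argument is essentially identical to the paper's own proof: both invoke Proposition II.1.14 of Alfsen to get the norm-additive decomposition $z=z_+-z_-$, use $f(z)=0$ together with $f|_{X_+}=\|\cdot\|$ to conclude $\|z_+\|=\|z_-\|=\|z\|/2$, and normalize to land in $\ck$. Your explicit treatment of the degenerate case $z=0$ (which the paper silently skips, and where its normalization would divide by zero) is a small but welcome addition.
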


\begin{proof} Denote $z=x-y$. Then due to \cite[Proposition
II.1.14]{Alf} one can find $a,b\in X_+$ such that $z=a-b$ and
$\|z\|=\|a\|+\|b\|$. One can see that $f(z)=0$, which yields
$f(a)=f(b)$. From $f(a)=\|a\|$, $f(b)=\|b\|$ we get $\|a\|=\|b\|$.
Hence, $\|a\|=\|z\|/2$. By putting
$$
u=\frac{a}{\|a\|}, \ \ v=\frac{b}{\|b\|}
$$
we obtain the required assertion.
\end{proof}

 The next result establishes several properties of the Dobrushin
ergodicity coefficient. Note that when $X^*$ is a von Neumann
algebra, similar properties were studied in \cite{M,IS}.

\begin{thm}\label{d-prp} Let $(X,X_+,\ck,f)$ be an OBSB, and $T,S: X\to X$ be Markov operators. The following assertions hold:
\begin{enumerate}
\item[(i)] $0\leq \d(T)\leq 1$;

\item[(ii)] $|\d(T)-\d(S)|\leq\d(T-S)\leq \|T-S\|$;

\item[(iii)] $\d(TS)\leq\d(T)\d(S)$;

\item[(iv)] if $H: X\to X$ is a linear bounded
operator such that $H^*(f)=0$, then $\|TH\|\leq\d(T)\|H\|$;

\item[(v)] one has
\be \label{db2} \d(T)=\frac{1}{2}\sup_{u,v\in \ck}\|Tu-Tv\|. \ee

\item[(vi)] if $\d(T)=0$, then there is $y_0\in X_{+}$ such that
$T=T_{y_0}$.
\end{enumerate}
\end{thm}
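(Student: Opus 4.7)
The plan is to prove (v) first, since it yields a dual description of $\d(T)$ that is more convenient than the definition and from which several of the other assertions flow immediately. The key input is Lemma \ref{3.2}: every $0\neq x\in N$ can be written as $x=\frac{\|x\|}{2}(u-v)$ with $u,v\in\ck$, while conversely, for any $u,v\in\ck$ one has $f(u-v)=0$, so $u-v\in N$, and $\|u-v\|\leq\|u\|+\|v\|=2$. Substituting the decomposition into (\ref{db}) gives $\|Tx\|/\|x\|=\frac{1}{2}\|Tu-Tv\|$, so the supremum over $N\setminus\{0\}$ dominates the right-hand side of (\ref{db2}); the reverse inequality follows from $\|T(u-v)\|/\|u-v\|\leq\d(T)$ combined with $\|u-v\|\leq 2$.

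Given (v), part (i) is immediate: $\d(T)\geq 0$ by definition, and $\frac{1}{2}\|Tu-Tv\|\leq\frac{1}{2}(\|Tu\|+\|Tv\|)=1$ because $T(\ck)\subset\ck$. For (iii), the Markov property yields $f(Sx)=(S^{*}f)(x)=f(x)=0$ whenever $x\in N$, so $Sx\in N$, and consequently $\|TSx\|\leq\d(T)\|Sx\|\leq\d(T)\d(S)\|x\|$. For (iv), the hypothesis $H^{*}f=0$ means $H(X)\subset N$, so for any $x\in X$ one has $\|THx\|\leq\d(T)\|Hx\|\leq\d(T)\|H\|\|x\|$.

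Part (ii) rests on the triangle inequality inside the defining supremum. For any $x\in N\setminus\{0\}$, $\|Tx\|\leq\|Sx\|+\|(T-S)x\|$; dividing by $\|x\|$ and taking the supremum yields $\d(T)\leq\d(S)+\d(T-S)$, and a symmetric estimate completes the first inequality. The second inequality $\d(T-S)\leq\|T-S\|$ is clear, since the supremum in (\ref{db}) is taken over a subset of $X\setminus\{0\}$. Observe that this step uses $\d$ applied to the non-Markov operator $T-S$, so one must work from the definition (\ref{db}) directly rather than from (\ref{db2}).

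Finally, for (vi), (v) forces $Tu=Tv$ for all $u,v\in\ck$ when $\d(T)=0$; let $y_{0}\in\ck\subset X_{+}$ denote this common value. For any nonzero $x\in X_{+}$, $x/f(x)\in\ck$ gives $Tx=f(x)\,y_{0}$; decomposing an arbitrary $x\in X$ as $x=y-z$ with $y,z\in X_{+}$ (using that $X_{+}$ is generating) then yields $Tx=f(x)\,y_{0}=T_{y_{0}}x$ by linearity. The main obstacle, such as it is, lies in setting up (v) carefully: the definition (\ref{db}) normalizes by $\|x\|$ while (\ref{db2}) does not reference this normalization, so one must exploit both directions of Lemma \ref{3.2}. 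Once (v) is in hand, everything else is short.
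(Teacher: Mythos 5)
Your proof is correct and follows essentially the same route as the paper: Lemma \ref{3.2} drives the key step (v), and the Markovianity/positivity arguments for (i), (iii), (iv), and (vi) are the same. The only differences are cosmetic reorganizations --- you prove (v) first and derive (i) from it, you replace the paper's $\varepsilon$-near-maximizer argument for (ii) with the subadditivity of the supremum under the triangle inequality, and you are somewhat more explicit than the paper about the reverse inequality in (v) (namely that $\|Tu-Tv\|\leq \d(T)\|u-v\|\leq 2\d(T)$ for $u,v\in\ck$), which the paper leaves implicit.
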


\begin{proof} (i) is obvious. Let us prove (ii). From \eqref{db} we immediately
find that $\d(T-S)\leq \|T-S\|$. Now let us establish the first
inequality. Without loss of generality, we may assume that
$\d(T)\geq\d(S)$. For an arbitrary $\i>0$ from \eqref{db} one finds
$x_\i\in N$ with $\|x_\i\|=1$ such that
$$
\d(T)\leq\|Tx_\i\|+\i.
$$
Then we get
\begin{eqnarray*}
\d(T)-\d(S)&\leq&\|Tx_\i\|+\i-\sup_{x\in N,\|x\|=1}\|Sx\|\\[2mm]
&\leq & \|Tx_\i\|-\|S\f_\i\|+\i\\
&\leq&\|(T-S)x_\i\|+\i\\
&\leq&\sup_{x\in N: \ \|x\|=1}\|(T-S)x\|+\i\\
&=&\d(T-S)+\i
\end{eqnarray*}
which implies the assertion.

(iii). Let $x\in N$, then the Markovianity of $S$ with Lemma
\ref{3.2} implies $f(Sx)=0$, hence $Sx\in N$. Due to \eqref{db} one
finds
\begin{eqnarray*}
\|TSx\|\leq\d(T)\|Sx\|\leq \d(T)\d(S)\|\f\|
\end{eqnarray*}
which yields $\d(TS)\leq\d(T)\d(S)$.

(iv). Let $H:X\to X$ be a linear bounded operator such that
$K^*(f)=0$. Then for every $x\in X$ one gets $Hx\in N$. Therefore,
\begin{eqnarray*}
\|THx\|\leq\d(T)\|Hx\|\leq\|H\|\d(T)\|x\|
\end{eqnarray*}
which yields the assertion.

 (v).  For $x\in N$, $x\neq 0$ using
Lemma \ref{3.2} we have \bea \frac{\|Tx\|}{\|x\|}=
\frac{\frac{\|x\|}{2}\|T(u-v)\|}{\|x\|}=\frac{\|Tu-Tv\|}{2}.\nonumber
\eea The equality (\ref{db}) together with the last equality imply
the desired one (\ref{db2}).

(vi). Let $\d(T)=0$, then from \eqref{db2} one gets $Tu=Tv$ for all
$u,v\in \ck$. Therefore, let us denote $y_0:=Tu$. It is clear that
$y_0\in \ck$. Moreover, $Ty_0=y_0$. Let $x\in X_{+}$, then noting
$\|x\|=f(x)$ we find
$$Tx=\|x\|T\bigg(\frac{x}{\|x\|}\bigg)=f(x)y_0.
$$

If $x\in X$, then $x=y-z$, where $y,z\geq 0$. Therefore,
$$
T(x)=T(y)-T(z)=f(y)y_0-f(z)y_0=f(x)y_0.
$$
This completes the proof.
\end{proof}

\begin{rem} We stress that similar kind of equality like (v) in
Theorem \ref{d-prp} has been proved in \cite{RKW} in case of finite
dimensional OBSB setting (see also \cite{GQ}).
\end{rem}

First we recall that a NDMC $\{T^{k,n}\}$ defined on $X$ is
\textit{weakly ergodic} if for each $k\in\N\cup\{0\}$ one has
$$
\lim_{n\to\infty}\sup_{x,y\in \ck}\|T^{k,n}x-T^{k,n}y\|=0.
$$
Note that taking into account Theorem \ref{d-prp}(v) we obtain that
the weak ergodicity is equivalent to the condition $\d(T^{k,n})\to
0$ as $n\to\infty$.

\begin{thm}\label{3.4} Let $(X,X_+,\ck,f)$ be an OBSB and $\{T^n\}$ be a
discrete homogeneous Markov chain on $X$.  The following conditions
are equivalent:
\begin{enumerate}
      \item[(i)] the chain $\{T^n\}$ is weakly ergodic;
\item[(ii)] there exists $\rho\in[0,1)$ and $n_0\in\N$ such that
   $\d(T^{n_0})\leq \rho$;
   \item[(iii)] the chain $\{T^n\}$ is uniformly ergodic. Moreover,
   there are positive constants $C,\a$, $n_0\in\bn$ and $y_0\in\ck$ such that
\begin{equation}\label{exp-11}
\|T^n-T_{y_0}\|\leq C\cdot e^{-\a n}, \ \ \forall n\geq n_0.
\end{equation}
\end{enumerate}
\end{thm}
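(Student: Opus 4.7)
My plan is to show the cyclic chain of implications (iii) $\Rightarrow$ (i) $\Rightarrow$ (ii) $\Rightarrow$ (iii), where the quantitative estimate in (iii) will drop out of the argument for (ii) $\Rightarrow$ (iii).

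The equivalence of weak ergodicity with the condition $\d(T^n)\to 0$ is immediate from Theorem \ref{d-prp}(v), because in the homogeneous case $T^{k,n}=T^{n-k}$ and $\sup_{u,v\in \ck}\|T^m u-T^m v\|=2\d(T^m)$. This makes the implications (iii) $\Rightarrow$ (i) and (i) $\Rightarrow$ (ii) routine: uniform ergodicity forces $T^n u$ and $T^n v$ to approach the common limit $y_0$, and once $\d(T^n)\to 0$ one can simply pick any $\rho\in[0,1)$ and take $n_0$ large.

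The substantive step is (ii) $\Rightarrow$ (iii). First I would use submultiplicativity (Theorem \ref{d-prp}(iii)) together with $\d(T)\leq 1$ from (i) of that theorem to obtain, for $n=kn_0+r$ with $0\leq r<n_0$, the bound $\d(T^n)\leq \d(T^{n_0})^k\leq \rho^k$. Next, fix $u\in\ck$ and show the orbit $\{T^n u\}$ is Cauchy: since $T^m u-u\in N$ (both vectors lie in $\ck$ and $f$ is $T^*$-invariant), we have
\[
\|T^{n+m}u-T^n u\|=\|T^n(T^m u-u)\|\leq \d(T^n)\|T^m u-u\|\leq 2\d(T^n),
\]
which tends to $0$ independently of $m$. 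Let $y_0=\lim_n T^n u$; by closedness of $\ck$ (it is a norm-closed face of $X_+$ on which $f$ equals $1$) and continuity of $T$ one has $y_0\in\ck$ and $Ty_0=y_0$. Independence of $y_0$ from the choice of $u\in\ck$ follows from $\|T^n u-T^n v\|\leq 2\d(T^n)\to 0$.

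Finally, to obtain the operator-norm estimate, I would pass from $\ck$ to all of $X$ by the decomposition afforded by Proposition II.1.14 of \cite{Alf} (already invoked in Lemma \ref{3.2}): given $x\in X$ write $x=y-z$ with $y,z\in X_+$ and $\|x\|=\|y\|+\|z\|$. Letting $m\to\infty$ in the Cauchy estimate above gives $\|T^n u-y_0\|\leq 2\d(T^n)$ for every $u\in\ck$, hence for $y\in X_+$ nonzero,
\[
\|T^n y-f(y)y_0\|=f(y)\bigl\|T^n(y/f(y))-y_0\bigr\|\leq 2f(y)\,\d(T^n),
\]
and similarly for $z$. Adding the two estimates yields $\|T^n x-T_{y_0}x\|\leq 2\|x\|\d(T^n)$, i.e.\ $\|T^n-T_{y_0}\|\leq 2\d(T^n)\leq 2\rho^{\lfloor n/n_0\rfloor}$. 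Choosing $\a=-(\ln\rho)/n_0>0$ and $C=2/\rho$ converts this into the exponential bound \eqref{exp-11}. The main obstacle I anticipate is the passage from the $\ck$-based Cauchy argument to the norm estimate on all of $X$; once the decomposition from \cite[Prop.~II.1.14]{Alf} is used carefully, everything reduces to the single inequality $\|T^n u-y_0\|\leq 2\d(T^n)$ for $u\in\ck$.
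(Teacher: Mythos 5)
Your argument is correct, and it reaches the same destination by the same main engine --- the submultiplicativity bound $\d(T^n)\leq\rho^{[n/n_0]}$ from Theorem \ref{d-prp}(i),(iii) --- but the way you produce the limit operator differs from the paper's. The paper proves that the sequence $\{T^n\}$ is Cauchy directly in operator norm, via property (iv) of Theorem \ref{d-prp} applied to $H=T-T^{m+1}$ (which kills $f$), obtains a limit Markov operator $Q$, and then identifies $Q=T_{y_0}$ abstractly: the Lipschitz property (ii) forces $\d(Q)=0$, and (vi) then says $Q$ is of the form $T_{y_0}$. You instead run the Cauchy argument on a single orbit $\{T^nu\}$, using only the definition of $\d$ and the fact that $T^mu-u\in N$, construct $y_0$ as the orbit limit, and recover the operator-norm statement by hand through the decomposition $x=y-z$ with $\|x\|=\|y\|+\|z\|$ from \cite[Prop.~II.1.14]{Alf}. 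Your route avoids invoking parts (ii), (iv), (vi) of Theorem \ref{d-prp} altogether and yields the explicit quantitative estimate $\|T^n-T_{y_0}\|\leq 2\,\d(T^n)$, which makes the constants $C=2/\rho$ and $\a=-(\ln\rho)/n_0$ in \eqref{exp-11} completely transparent (the degenerate case $\rho=0$ gives $\d(T^n)=0$ for $n\geq n_0$ and the bound is trivial); the paper's route is shorter at the operator level but leaves the constants implicit. Both are valid; the only points you should make sure to state explicitly are that $f$ is norm-continuous (since $|f(x)|\leq\|x\|_{\ck}$) so that $\ck$ is closed and the limit $y_0$ indeed lies in $\ck$, and that $\|T^mu-u\|\leq 2$ because both vectors have base norm one.
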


\begin{proof} The implications (i) $\Rightarrow$ (ii) and (iii) $\Rightarrow$ (i) are obvious.
Therefore, it is enough to prove the implication (ii) $\Rightarrow$
(iii). Let $\rho\in[0,1)$ and $n_0\in\N$ such that $\d(T^{n_0})\leq
\rho$. Now from (iii) and (i) of Theorem \ref{d-prp} one gets
\begin{equation}\label{dn}
\d(T^n)\leq \rho^{[n/n_0]}\to 0 \ \ \textrm{as} \ \ n\to\infty,
\end{equation}
where $[a]$ stands for the integer part of $a$.

Let us show that  $\{T^n\}$ is a Cauchy sequence w.r.t. to the norm.
Indeed, using (iv) of Theorem \ref{d-prp} and \eqref{dn} we have
\begin{eqnarray}\label{dn1}
\|T^n-T^{n+m}\|&=&\|T^{n-1}(T-T^{m+1})\|\nonumber \\[2mm]
&\leq&\d(T^{n-1})\|T-T^{m+1}\|\to 0 \ \ \textrm{as} \ \ n\to\infty.
\end{eqnarray}
Hence, there is a Markov operator $Q$ such that $\|T^n-Q\|\to 0$.
Let us show that $Q=T_{y_0}$, for some $y_0\in X_{+}$. According to
(vi) of Theorem \ref{d-prp} it is enough to establish $\d(Q)=0$.

So, using (ii) of Theorem \ref{d-prp} we have
\begin{eqnarray*}
|\d(T^n)-\d(Q)|\leq \|T^n-Q\|.
\end{eqnarray*}
Now passing to the limit $n\to\infty$ at the last inequality and
taking into account \eqref{dn}, one gets $\d(Q)=0$, which is the
desired assertion. From \eqref{dn1},\eqref{dn} and $Q=T_{y_0}$ we
immediately get \eqref{exp-11}.

\end{proof}

\begin{rem} Note that the proved theorem is a abstract version Bartoszek's
result \cite{B}. Moreover, the proved theorem extends earlier
results obtained in \cite{BK1} for $X=\mathcal{C}_1$, where
$\mathcal{C}_1$ is the Schatten class 1. A similar result has been
obtained in \cite{M} in case is von Neuamann algebra. In \cite{GQ}
uniform ergodicity of $\{T^n\}$ defined on order-unit space, under a
stronger condition (i.e. $\d(T)<1$) than provided one, has been
proved. In Theorem \ref{3.4} the condition $\rho<1$ is crucial,
otherwise the statement of the theorem fails (see \cite[Remark
3.3]{M}).
\end{rem}

Let us denote by $\Sigma(X)$ the set of all Markov operators defined
on $X$ By $\Sigma(X)_{ue}$ we denote the set of all Markov operators
$T$ such that the corresponding discrete Markov chain $\{T^n\}$ is
uniformly ergodic.

\begin{thm}\label{Norm-M} Let $(X,X_+,\ck,f)$ be an OBSB. Then the set $\Sigma(X)_{ue}$ is
a norm dense and open subset of $\Sigma(X)$.
\end{thm}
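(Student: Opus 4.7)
The plan is to reduce both assertions to the characterization provided by Theorem \ref{3.4}: a Markov operator $T$ lies in $\Sigma(X)_{ue}$ if and only if there exist $n_0\in\N$ and $\rho\in[0,1)$ with $\d(T^{n_0})\leq\rho$. Once one has this criterion, openness becomes a short perturbation argument and density reduces to exhibiting an explicit small perturbation whose Dobrushin coefficient is bounded away from $1$.

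For openness, fix $T\in\Sigma(X)_{ue}$ and choose $n_0\in\N$ and $\rho\in[0,1)$ with $\d(T^{n_0})\leq\rho$. For any $S\in\Sigma(X)$, the telescoping identity
\[
T^{n_0}-S^{n_0}=\sum_{k=0}^{n_0-1}T^{k}(T-S)S^{n_0-1-k},
\]
together with $\|T\|=\|S\|=1$, yields $\|T^{n_0}-S^{n_0}\|\leq n_0\|T-S\|$. Invoking Theorem \ref{d-prp}(ii) gives
\[
\d(S^{n_0})\leq \d(T^{n_0})+\|T^{n_0}-S^{n_0}\|\leq \rho+n_0\|T-S\|.
\]
Thus whenever $\|T-S\|<(1-\rho)/n_0$ one has $\d(S^{n_0})<1$, so by Theorem \ref{3.4} we conclude $S\in\Sigma(X)_{ue}$. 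This exhibits an explicit open ball of radius $(1-\rho)/n_0$ around $T$ inside $\Sigma(X)_{ue}$.

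For density, let $T\in\Sigma(X)$ be arbitrary, fix any $y_0\in\ck$, and for $\i\in(0,1)$ consider
\[
S_\i=(1-\i)T+\i T_{y_0}.
\]
The operator $T_{y_0}$ is Markov since $T_{y_0}(x)=f(x)y_0$ is positive on $X_+$ and sends $\ck$ to $\{y_0\}\subset\ck$; hence $S_\i\in\Sigma(X)$ by convexity. For any $x\in N$ we have $f(x)=0$, so $T_{y_0}x=0$ and $S_\i x=(1-\i)Tx$, giving $\d(S_\i)\leq 1-\i<1$. Theorem \ref{3.4} then puts $S_\i\in\Sigma(X)_{ue}$, while $\|T-S_\i\|=\i\|T-T_{y_0}\|\leq 2\i\to 0$ as $\i\to 0$. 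The only delicate point in the whole argument is that $T_{y_0}$ is genuinely Markov and acts as zero on $N$, both of which follow at once from $y_0\in\ck$ and the defining property $f|_N=0$; beyond this the proof is essentially formal, the substance having been absorbed into Theorem \ref{3.4} and Theorem \ref{d-prp}(ii).
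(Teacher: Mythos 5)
Your proof is correct and follows essentially the same route as the paper: density is obtained by perturbing $T$ toward a rank-one Markov operator $T_{y_0}$ and noting the resulting operator has Dobrushin coefficient at most $1-\i$ (you observe directly that $T_{y_0}$ vanishes on $N$, where the paper invokes Lemma \ref{3.2}, but this is the same computation), and openness rests on the estimate $\|T^{n_0}-S^{n_0}\|\leq n_0\|T-S\|$ combined with Theorem \ref{d-prp}(ii) and Theorem \ref{3.4}. The only cosmetic difference is that the paper packages openness as the union of the open sets $\{T:\d(T^n)<1\}$ while you exhibit an explicit ball of radius $(1-\rho)/n_0$; the underlying argument is identical.
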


\begin{proof} Take an arbitrary $T\in\Sigma(X)$ and $0<\i<2$. Given
$\phi\in \ck$ let us denote
$$
T^{(\i)}=\bigg(1-\frac{\i}{2}\bigg)T+\frac{\i}{2} T_\phi.
$$
It is clear that $T^{(\i)}\in\Sigma(X)$ and $\|T-T^{(\i)}\|<\i$. Now
we show that $T^{(\i)}\in\Sigma(X)_{ue}$. Indeed, by Lemma
\ref{3.2}, if $x-y\in N$, we get
\begin{eqnarray*}
\|T^{(\i)}(x-y)\|&=&\frac{\|x-y\|}{2}\|T^{(\i)}(u-v)\| \\[2mm]
&=&\frac{\|x-y\|_1}{2}\bigg\|\bigg(1-\frac{\i}{2}\bigg)T(u-v)+
\frac{\i}{2}T_\phi(u-v)\bigg\|\\[2mm]
&=&\frac{\|x-y\|}{2}\bigg\|\bigg(1-\frac{\i}{2}\bigg)T(u-v)\bigg\|\\[2mm]
&\leq&\bigg(1-\frac{\i}{2}\bigg)\|x-y\|
\end{eqnarray*}
which implies $\d(T_\i)\leq 1-\frac{\i}{2}$. Here $u,v\in \ck$.
Hence, due to Theorem \ref{3.4} we infer that
$T_\i\in\Sigma(X)_{ue}$.

Now let us show that $\Sigma(X)_{ue}$ is a norm open set. First we
establish that for each $n\in\N$ the set
$$
\Sigma(X)_{ue,n}=\bigg\{T\in \Sigma(X):  \ \ \d(T^n)< 1\bigg\}
$$
is open. Indeed, take any $T\in\Sigma(X)_{ue,n}$, then
$\a:=\d(T^n)<1$. Choose $0<\b<1$ such that $\a+\b<1$. Then for any
$H\in\Sigma(X)$ with $\|H-T\|<\b/n$ by using (ii) Theorem
\ref{d-prp} we find
\begin{eqnarray*}
|\d(H^n)-\d(T^n)|&\leq&\|H^n-T^n\|\\[2mm]
&\leq& \|H^{n-1}(H-T)\|+\|(H^{n-1}-T^{n-1})T\|\\[2mm]
&\leq& \|H-T\|+\|H^{n-1}-T^{n-1}\|\\[2mm]
&\cdots&\\
&\leq& n\|H-T\|<\b.
\end{eqnarray*}
Hence, the last inequality yields that $\d(H^n)<\d(T^n)+\b<1$, i.e.
$H\in\Sigma(X)_{ue,n}$.

Now from the equality
$$
\Sigma(X)_{ue}=\bigcup_{n\in\N}\Sigma(X)_{ue,n}
$$
we obtain that $\Sigma(X)_{ue}$ is open. This completes the proof.
\end{proof}

\begin{rem} Note that a similar result has been proved in Theorem 2.4
\cite{BK2} for $X=\mathcal{C}_1$, where $\mathcal{C}_1$ is the
Schatten class 1. So, the proved theorem extends Theorem 2.4 for
general abstract spaces. We point out that in the setting of general
stochastic operators on classical $L^1$-spaces such a result was
originally discussed in \cite{Iw} and further refinements due to W.
Bartoszek \cite{B0}. The question on the geometric structure of the
set of uniformly ergodic operators its size and category was
initiated in \cite{Hal}.
\end{rem}

\section{Weak ergodicity of nonhomogeneous Markov chains}

In this section we are going to illustrate the usefulness of the
Dobrushin ergodicity coefficient in study of weak ergodicity of
nonhomogeneous discrete Markov chains defined on OBSB.

Let $(X,X_+,\ck,f)$ be an OBSB. We say that a NDMC $\{T^{k,n}\}$
defied on $X$ satisfies \textit{condition $\frak{D}$} if for each
$k$ there exist an element $z_k\in \ck$, a constant $\l_k\in(0,2]$,
an integer $n_k\in\N$, and for every $x\in \ck$, one can find
$\f_{k,x}\in X_{+}$ with $\sup\limits_{x}\|\f_{k,x}\|\leq
\frac{\l_k}{4}$ such that
\begin{equation}\label{Dk}
T^{k,n_k}x+\f_{k,x}\geq\l_k z_k,
\end{equation}
and
\begin{equation}\label{Lam1}
\liminf_{k\to\infty}\l_{k}>0.
\end{equation}

Note that the given condition is an abstract analogue of the
well-known \cite{Se} Doeblin's Condition.

\begin{thm}\label{N2}
Let $(X,X_+,\ck,f)$ be an OBSB  and assume that a NDMC $\{T^{k,n}\}$
defined on $X$. Then the following conditions are equivalent:
\begin{enumerate}
\item[(i)] the chain $\{T^{k,n}\}$ satisfies condition $\frak{D}$;

\item[(ii)] the chain $\{T^{k,n}\}$ is weak ergodic.
\end{enumerate}
\end{thm}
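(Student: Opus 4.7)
The plan is to translate weak ergodicity into the language of the Dobrushin coefficient via Theorem~\ref{d-prp}(v), so that weak ergodicity becomes the statement $\d(T^{k,n})\to 0$ as $n\to\infty$ for every $k$, and then to play the one-step Doeblin bound supplied by \eqref{Dk} against the submultiplicativity in Theorem~\ref{d-prp}(iii). The positive decomposition $x=y-z$ with $\|x\|=\|y\|+\|z\|$ underlying Lemma~\ref{3.2} is the key auxiliary tool throughout.

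For the direction (i)~$\Rightarrow$~(ii), I would fix $k$ and, for arbitrary $u,v\in\ck$, apply \eqref{Dk} to form the positive elements $a_u:=T^{k,n_k}u-\l_k z_k+\f_{k,u}\ge 0$ and $a_v:=T^{k,n_k}v-\l_k z_k+\f_{k,v}\ge 0$. Evaluating the base functional $f$ on these gives $\|a_u\|=1-\l_k+\|\f_{k,u}\|$ (and likewise for $a_v$). Writing $T^{k,n_k}u-T^{k,n_k}v=(a_u-a_v)+(\f_{k,v}-\f_{k,u})$, the triangle inequality together with the bound $\|\f_{k,\cdot}\|\le\l_k/4$ yields $\|T^{k,n_k}u-T^{k,n_k}v\|\le 2-\l_k$, so Theorem~\ref{d-prp}(v) produces
\[\d(T^{k,n_k})\le 1-\l_k/2.\]
I would then cascade times by setting $m_0:=k$ and $m_{j+1}:=n_{m_j}$, write $T^{k,m_J}=T^{m_{J-1},m_J}\cdots T^{m_0,m_1}$, and invoke Theorem~\ref{d-prp}(iii) to obtain $\d(T^{k,m_J})\le\prod_{j=0}^{J-1}(1-\l_{m_j}/2)$. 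The hypothesis \eqref{Lam1} ensures that infinitely many factors are uniformly bounded below $1$, forcing the product to $0$; since $\d(T^{k,n})\le\d(T^{k,m_J})$ for $n\ge m_J$ by another use of submultiplicativity, weak ergodicity follows.

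For the converse (ii)~$\Rightarrow$~(i), I would take $\l_k\equiv 1$ (so \eqref{Lam1} is automatic), fix once for all some $\f\in\ck$, use the assumed decay $\d(T^{k,n})\to 0$ to pick $n_k$ with $\d(T^{k,n_k})<1/8$, and set $z_k:=T^{k,n_k}\f\in\ck$. For any $x\in\ck$, $z_k-T^{k,n_k}x=T^{k,n_k}(\f-x)$ with $\f-x\in N$ and $\|\f-x\|\le 2$, so $\|z_k-T^{k,n_k}x\|\le 2\d(T^{k,n_k})<1/4$. Applying the positive decomposition to $z_k-T^{k,n_k}x$ yields $\f_{k,x},b_{k,x}\in X_+$ with $z_k-T^{k,n_k}x=\f_{k,x}-b_{k,x}$ and $\|\f_{k,x}\|+\|b_{k,x}\|=\|z_k-T^{k,n_k}x\|$. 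Then $T^{k,n_k}x+\f_{k,x}=z_k+b_{k,x}\ge z_k=\l_k z_k$, while $\sup_{x\in\ck}\|\f_{k,x}\|\le 1/4=\l_k/4$, verifying condition $\frak{D}$.

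I expect the main obstacle to be the telescoping step in (i)~$\Rightarrow$~(ii): the one-step contraction $\d(T^{k,n_k})\le 1-\l_k/2$ must be chained along the NDMC even though the step sizes $n_k$ depend on $k$. One must check that the sequence $m_j$ is strictly increasing to infinity and that the $\l_{m_j}$ remain uniformly bounded below along it; the role of $\liminf_k\l_k>0$ is precisely to prevent the product $\prod_j(1-\l_{m_j}/2)$ from stalling at a positive limit. The converse, by contrast, is essentially a one-line application of the positive decomposition to a quantity known to be small.
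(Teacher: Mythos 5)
Your proposal is correct and follows essentially the same route as the paper: condition $\frak{D}$ is converted into the one-step bound $\d(T^{k,n_k})\le 1-\l_k/2$ by evaluating the base functional $f$ on the positive elements produced by \eqref{Dk} (the paper normalizes these to elements of $\ck$ and uses $\|x_1-y_1\|\le 2$, while you apply the triangle inequality directly, but both yield the bound $2-\l_k$), after which submultiplicativity of $\d$ and \eqref{Lam1} give weak ergodicity; the converse is the same positive-part decomposition of $T^{k,n_k}y_0-T^{k,n_k}x$ with $\l_k=1$ and $z_k=T^{k,n_k}y_0$. The only cosmetic deviations are your use of $\d(T^{k,n_k})<1/8$ in place of the paper's direct bound $\sup_x\|T^{k,n_k}x-T^{k,n_k}y_0\|\le 1/4$, and your (legitimate) explicit concern about the cascade times tending to infinity, which the paper glosses over.
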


\begin{proof} (i) $\Rightarrow$ (ii). Fix $k\in\N\cup\{0\}$, then one finds $z_k\in \ck$.
According to condition $\frak{D}$ for any two elements $x,y\in\ck$,
there exist $\l_k\in[0,2]$, $n_k\in\N$ and $\f_{k,x},\f_{k,y}\in
X_{+}$ with $\|\f_{k,x}\|\leq \frac{\l_k}{4}$, $\|\f_{k,y}\|\leq
\frac{\l_k}{4}$  such that
\begin{equation}\label{N22}
T^{k,k+n_k}x+\f_{k,x}\geq\l_k z_k, \ \ T^{k,k+n_k}y+\f_{k,y}\geq\l_k
z_k.
\end{equation}

Now denote $\f_k=\f_{k,x}+\f_{k,y}$, then we have
\begin{equation}\label{N23}
\|\f_k\|\leq\frac{\l_k}{2}.
\end{equation}
From \eqref{N22} one finds
\begin{eqnarray}\label{N24}
T^{k,k+n_k}x+\f_{k}\geq T^{k,k+n_k}x+\f_{k,x}\geq\l_k z_k.
\end{eqnarray}
Similarly,
\begin{eqnarray}\label{N25}
T^{k,k+n_k}y+\f_{k}\geq \l_k z_k.
\end{eqnarray}

Therefore, using Markovianity of $T^{k,n}$, and inequality
\eqref{N24} with \eqref{N23} implies
\begin{eqnarray*}
\|T^{k,k+n_k}x+\f_{k}-\l_k z_k\|&=&f(T^{k,k+n_k}x+\f_{k}-\l_k z_k)\\[2mm]
&=&1-(\underbrace{\l_k-f(\f_{k})}_{c_1})\\[2mm]
&=&1-c_1\leq 1-\frac{\l_k}{2}.
\end{eqnarray*}
By the same argument and using \eqref{N25}, we find
\begin{eqnarray*}
\|T^{k,k+n_k}y+\f_{k}-\l_k z_k\|=1-c_1\leq 1-\frac{\l_k}{2}.
\end{eqnarray*}

Let us denote
\begin{eqnarray*}
x_1=\frac{1}{1-c_1}(T^{k,k+n_k}x+\f_{k}-\l_k z_k),\\[2mm]
y_1=\frac{1}{1-c_1}(T^{k,k+n_k}y+\f_{k}-\l_k z_k).
\end{eqnarray*}
It is clear that $x_1,y_1\in \ck$.

So, one has
\begin{equation}\label{N26}
\|T^{k,k+n_k}x-T^{k,k+n_k}y\|=(1-c_1)\|x_1-y_1\|\leq
2\bigg(1-\frac{\l_k}{2}\bigg).
\end{equation}

Hence, from \eqref{db2} and \eqref{N26} we obtain
$$
\d(T^{k,k+n_k})\leq \m_k,
$$
where $\m_k=1-\frac{\l_k}{2}$.

For $\ell_1:=k+n_k$ we again apply the given condition, then one can
find $\m_{\ell_1}$, $n_{\ell_1}$ such that
$\d(T^{\ell_1,\ell_1+n_{\ell_1}})\leq\m_{\ell_1}$. Now continuing
this procedure one finds sequences $\{\ell_j\}$ and
$\{\m_{\ell_j}\}$ such that
$$
\ell_0=k, \ \ell_1=\ell_0+n_k, \ \ell_2=\ell_1+n_{\ell_1}, \dots,
\ell_m=\ell_{m-1}+n_{\ell_{m-1}}, \dots
$$
and $\d(T^{\ell_j,\ell_{j+1}})\leq\m_{\ell_j}$.

Now for large enough $n$ one can find $L$ such that
$$L=\max\{j: \
\ell_j+n_{\ell_j}\leq n\}.$$

Then due to (iii) of Theorem \ref{d-prp} we get
\begin{eqnarray*}
\d(T^{k,n})&=&\d\big(T^{n,\ell_L}T^{\ell_{L-1},\ell_{L}}\cdots
T^{\ell_0,\ell_1}\big)\\
&\leq& \prod_{j=0}^{L-1}\d(T^{\ell_{L-j},\ell_{L-j+1}})\\
&\leq& \prod_{j=0}^{L-1}\m_{\ell_j}.
\end{eqnarray*}
Now taking into account \eqref{Lam1} one finds
$$
\limsup_{k\to\infty}\m_k<1
$$
which implies the weak ergodicity of $\{T^{k,n}\}$.

(ii)$\Rightarrow$(i). Let $\{T^{k,n}\}$ be weak ergodic. Take any
$k\in\N\cup\{0\}$, and fix some element $y_0\in\ck$. Then one gets
\begin{equation*}
\sup_{x\in\ck}\|T^{k,n}x-T^{k,n}y_0\|\to 0\ \ \ \textrm{as} \ \
n\to\infty.
\end{equation*}
Therefore, one can find $n_k\in\N$ such that
\begin{equation}\label{N27}
\sup_{x\in\ck}\|T^{k,n_k}x-T^{k,n_k}y_0\|\leq\frac{1}{4}, \ \
\end{equation}

Due to Proposition II.1.14 \cite{Alf} we can decompose
$$
T^{k,n_k}x-T^{k,n_k}y_0=(T^{k,n_k}x-T^{k,n_k}y_0)_+-(T^{k,n_k}x-T^{k,n_k}y_0)_-.
$$
Denote
$$
\f_{k,x}=(T^{k,n_k}x-T^{k,n_k}y_0)_-,
$$ It follows from \eqref{N27} that
$$
\sup_{x\in\ck}\|\f_{k,x}\|=\sup_{x\in
\ck}\|(T^{k,n_k}x-T^{k,n_k}y_0)_-\|\leq
\sup_{x\in\ck}\|T^{k,n_k}x-T^{k,n_k}y_0\|\leq\frac{1}{4}.
$$

It is clear that
\begin{eqnarray*}
T^{k,n_k}x+\f_{k,x}&=&T^{k,n_k}y_0+T^{k,n_k}x-T^{k,n_k}y_0+\f_{k,x}\\[2mm]
&=&T^{k,n_k}y_0+(T^{k,n_k}x-T^{k,n_k}y_0)_+\\
 &\geq& T^{k,n_k}y_0.
\end{eqnarray*}
By denoting $\l_k=1$ and $z_k=T^{k,n_k}y_0$, we conclude that the
process $\{T^{k,m}\}$ satisfies condition $\frak{D}$.
\end{proof}

Note that if $X$ is an $L^1$-space, then a similar result has been
proved in \cite{DP}.

\section{$L$-weak ergodicity}

Let $(X,X_+,\ck,f)$ be an OBSB and $\{T^{k,n}\}$ be a NDMC defined
on $X$.

\begin{defin}\label{d6.1} We say that $\{T^{k,n}\}$ satisfies
\begin{enumerate}
\item[(i)] the \textit{$L$-weak ergodicity} if for any $u,v\in
\ck$ and $k\in\N\cup\{0\}$ one has
\begin{equation}\label{L1wed}
\lim_{n\rightarrow\infty}\|T^{k,n}u-T^{k,n}v\|=0.
\end{equation}

\item[(ii)] the \textit{$L$-strong ergodicity} if there exists
$y_0\in \ck$ such that for every $k\in\N\cup\{0\}$ and $u\in \ck$
one has
\begin{equation}\label{L1sed}
\lim_{n\rightarrow\infty}\|T^{k,n}u-y_0\|=0.
\end{equation}
\end{enumerate}
\end{defin}

\begin{rem} It is clear that the weak ergodicity implies the
$L$-weak ergodicity. But, the reverse is not true (see \cite{M1}).
\end{rem}

\begin{rem} Note that if for each $k\geq 0$ there exists
$y_k\in \ck$ such that for every $u\in \ck$ one has
\begin{equation}\label{L1r2}
\lim_{n\rightarrow\infty}\|T^{k,n}u-y_k\|=0,
\end{equation}
then the process is the $L$-strong ergodic. Indeed, it is enough to
show that $y_0=y_k$ for all $k\geq 1$. For any $u,v\in \ck$, one has
$T^{0,n}u\to y_0$, $T^{k,n}u\to y_k$ as $n\to\infty$. From this we
conclude that $T^{k,n}(T^{0,k}u)\to y_k$ as $n\to\infty$. Now the
equality $T^{0,n}u=T^{k,n}T^{0,k}u$ implies that $y_0=y_k$.\\
\end{rem}

Now we consider two conditions for NDMC which are weaker analogous
of the condition $\frak{D}$.

Let $(X,X_+,\ck,f)$ be an OBSB and let $\{T^{k,n}\}$ be a NDMC on
$X$ and $\frak{N}\subset\ck$. We say that $\{T^{k,n}\}$ satisfies
\begin{enumerate}
\item[(a)] \textit{condition $\frak{D}_1$} on $\frak{N}$ if for each
$k$ there exist $z_k\in\ck$ and a constant $\l_k\in[0,2]$, and for
every $x,y\in \frak{N}$, one can find an integer $n_k\in\N$ and
$\f_{k,x},\f_{k,y}\in X_+$ with $\|\f_{k,x}\|\leq \l_k/4$,
$\|\f_{k,y}\|\leq \l_k/4$ such that
\begin{equation}\label{D1}
T^{k,n_k}x+\f_{k,x}\geq\l_k z_k, \ \ T^{k,n_k}y+\f_{k,y}\geq\l_k
z_k,
\end{equation}
with \eqref{Lam1}.

\item[(b)] \textit{condition
$\frak{D}_2$} on $\frak{N}$ if for each $k$ there exist $z_k\in\ck$
and a constant $\l_k\in[0,2]$, and for every $x\in \frak{N}$, one
can find a sequence $\{\f_{k,x}^{(n)}\}\subset X_+$ with
$\|\f_{k,x}^{(n)}\|\to 0$ as $n\to\infty$ such that
\begin{equation}\label{D2}
T^{k,n}x+\f_{k,x}^{(n)}\geq\l_k z_k, \ \ \textrm{for all} \ \ n\geq
k
\end{equation}
where $\{\l_k\}$ satisfies \eqref{Lam1}.
\end{enumerate}

Next theorem shows that condition $\frak{D}_2$ is stronger than
$\frak{D}_1$.

\begin{thm}\label{L1S1}
Let $(X,X_+,\ck,f)$ be an OBSB and assume that a NDMC $\{T^{k,n}\}$
defined on $X$. Then for the following statements:
\begin{enumerate}
\item[(i)] $\{T^{k,n}\}$ satisfies condition $\frak{D}_2$ on $\ck$;
\item[(ii)] $\{T^{k,n}\}$ satisfies condition $\frak{D}_2$ on a dense set $\frak{N}$ in
$\ck$;
\item[(iii)] $\{T^{k,n}\}$ satisfies condition $\frak{D}_1$ on a dense set $\frak{N}$ in
$\ck$.
\item[(iv)] for
each $k$ and every $x,y\in \ck$, there exists $\g_k\in[0,1)$ and
   $n_0=n_0(x,y,k)\in\N$ such that
\begin{equation}\label{L1-1}
\|T^{k,k+n_0}x-T^{k,k+n_0}y\|\leq \g_k\|x-y\|
\end{equation}
with
\begin{equation}\label{L1-2}
\limsup_{k\to\infty}\g_{n}<1;
\end{equation}
\item[(v)] $\{T^{k,n}\}$ is the $L$-weak ergodic;
\item[(vi)] $\{T^{k,n}\}$ satisfies condition $\frak{D}_1$ on $\ck$;
\end{enumerate}
the implications are true:
(i)$\Rightarrow$(ii)$\Rightarrow$(iii)$\Leftrightarrow$(iv)$\Leftrightarrow$(v)$\Leftrightarrow$(vi).
\end{thm}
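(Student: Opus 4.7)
The plan is to close the cycle (i)$\Rightarrow$(ii)$\Rightarrow$(iii)$\Rightarrow$(iv)$\Rightarrow$(v)$\Rightarrow$(vi)$\Rightarrow$(iii), which delivers every implication and equivalence asserted. The Dobrushin coefficient machinery from Theorem~\ref{d-prp} and the template of the proof of Theorem~\ref{N2} drive nearly every step, with the most delicate piece being (v)$\Rightarrow$(vi).

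The routine links come first. The implications (i)$\Rightarrow$(ii) and (vi)$\Rightarrow$(iii) are both immediate by taking $\frak{N}=\ck$. For (ii)$\Rightarrow$(iii), given a pair $x,y\in\frak{N}$, I would choose $n_k=n_k(x,y)$ large enough that both $\|\f^{(n_k)}_{k,x}\|$ and $\|\f^{(n_k)}_{k,y}\|$ fall below $\l_k/4$, so that setting $\f_{k,x}:=\f^{(n_k)}_{k,x}$ (and similarly for $y$) upgrades $\frak{D}_2$ to $\frak{D}_1$ on $\frak{N}$.

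For (iii)$\Rightarrow$(iv) I would replay the computation in the proof of Theorem~\ref{N2} on pairs in $\frak{N}$ to obtain $\|T^{k,n_k(x,y)}x-T^{k,n_k(x,y)}y\|\le 2(1-\l_k/2)$; combining Lemma~\ref{3.2}, the identity in Theorem~\ref{d-prp}(v), and a density-plus-continuity argument yields the bound $\d(T^{k,m})\le 1-\l_k/2$ for a suitable $m=m(x,y)$, and Theorem~\ref{d-prp}(iv) applied to $x-y\in N$ then produces the Lipschitz estimate of (iv) with $\g_k=1-\l_k/2$. For (iv)$\Rightarrow$(v), I would iterate (iv) along a subsequence $k=k_0<k_1<\cdots$ built inductively by $k_{j+1}:=k_j+n_0(T^{k,k_j}u,\,T^{k,k_j}v,\,k_j)$, chain the contractions to $\|T^{k,k_j}u-T^{k,k_j}v\|\le\prod_{i<j}\g_{k_i}\|u-v\|\to 0$ using $\limsup_k\g_k<1$, and fill in intermediate times via $\|T^{k,n}u-T^{k,n}v\|\le\|T^{k,k_j}u-T^{k,k_j}v\|$ for the largest $k_j\le n$ (by Markovianity of $T^{k_j,n}$).

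The heart of the argument is (v)$\Rightarrow$(vi). Fix $k$ and some $y_0\in\ck$. For each pair $x,y\in\ck$, $L$-weak ergodicity supplies $n_k=n_k(x,y)$ making $\|T^{k,n_k}x-T^{k,n_k}y_0\|$ and $\|T^{k,n_k}y-T^{k,n_k}y_0\|$ arbitrarily small; Proposition~II.1.14 of \cite{Alf} then decomposes $T^{k,n_k}x-T^{k,n_k}y_0=a_x-b_x$ with $\|b_x\|$ controlled by the same small quantity, so that $T^{k,n_k}x+b_x\ge T^{k,n_k}y_0$. The obstacle is that the naive candidate $z_k:=T^{k,n_k}y_0$ depends on $(x,y)$ through $n_k$, whereas $\frak{D}_1$ demands a single $z_k$ per $k$. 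My proposed fix is to pre-select $z_k\in\ck$ as a cluster point of the orbit $\{T^{k,n}y_0\}_{n\ge k}$ and to restrict the choice of $n_k(x,y)$ to a subsequence along which $T^{k,n}y_0\to z_k$; one extra triangle inequality then gives $T^{k,n_k}x+\f_{k,x}\ge z_k$ with $\|\f_{k,x}\|\le 1/4$, closing $\frak{D}_1$ with $\l_k=1$. Making the cluster-point step go through cleanly without extra compactness hypotheses on $\ck$ is where I expect to spend the most effort.
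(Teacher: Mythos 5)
Your cycle (i)$\Rightarrow$(ii)$\Rightarrow$(iii)$\Rightarrow$(iv)$\Rightarrow$(v)$\Rightarrow$(vi)$\Rightarrow$(iii) is exactly the paper's, and the links (i)$\Rightarrow$(ii), (ii)$\Rightarrow$(iii), (iv)$\Rightarrow$(v) and (vi)$\Rightarrow$(iii) match the paper's arguments. Two steps are problematic. In (iii)$\Rightarrow$(iv) your detour through the Dobrushin coefficient does not work: $\d(T^{k,m})=\frac12\sup_{u,v\in\ck}\|T^{k,m}u-T^{k,m}v\|$ is a supremum over \emph{all} pairs at a \emph{fixed} time $m$, whereas condition $\frak{D}_1$ only hands you, for each pair, a time $n_k(x,y)$ at which that particular pair contracts; since the time varies with the pair, no single $m$ gives $\d(T^{k,m})\le 1-\l_k/2$ (if it did, you would be proving weak ergodicity, which is strictly stronger than (iv)--(v)). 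The phrase ``$\d(T^{k,m})\le 1-\l_k/2$ for a suitable $m=m(x,y)$'' is not even well-formed, since the left-hand side does not depend on $(x,y)$. The paper avoids $\d$ entirely here: it writes $x-y=\frac{\|x-y\|}{2}(u-v)$ by Lemma \ref{3.2}, picks $u_1,v_1\in\frak{N}$ with $\|u-u_1\|,\|v-v_1\|<\i$, runs the normalization computation of Theorem \ref{N2} on the pair $(u_1,v_1)$ to get $\|T^{k,n_k}u_1-T^{k,n_k}v_1\|\le 2(1-\l_k/2)$, and then estimates $\|T^{k,n_k}x-T^{k,n_k}y\|\le(\i+1-\frac{\l_k}{2})\|x-y\|$ directly for the given pair; taking $\i<\l_k/4$ yields (iv) with $\g_k=1-\l_k/4$. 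Your own first sentence of that step already contains the right computation --- keep it pairwise and drop the passage to $\d$.

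For (v)$\Rightarrow$(vi) you have correctly spotted a quantifier subtlety, but your fix does not close. In a general OBSB the base $\ck$ is not compact, and $L$-weak ergodicity does not make the single orbit $\{T^{k,n}y_0\}_{n\ge k}$ convergent or even precompact (the paper's Example 1 in Section 6 is $L$-weakly but not $L$-strongly ergodic), so the cluster point $z_k$ you want need not exist; as you yourself anticipate, this step cannot be completed without extra hypotheses. The paper simply sets $z_k=T^{k,n_k}y_0$ with $n_k=n_k(x,y)$, i.e.\ it effectively reads condition $\frak{D}_1$ with the reference element allowed to vary with the pair (or is silently loose about the quantifier order in the definition). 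This does no harm to the theorem: in the only implication that consumes $\frak{D}_1$, namely (iii)$\Rightarrow$(iv), the element $z_k$ cancels out of the computation and its independence of $(x,y)$ is never used, so the cycle of equivalences closes under the pair-dependent reading. The efficient resolution is therefore not a compactness argument but either adopting that reading or remarking explicitly that the weaker, pair-dependent form of $\frak{D}_1$ suffices for all remaining implications.
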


\begin{proof} The implication (i)$\Rightarrow$ (ii) is obvious.
Consider (ii)$\Rightarrow$ (iii).  For a fixed $k\geq 0$, take
arbitrary $x,y\in \frak{N}$. Due to condition $\frak{D}_2$ one can
find $\l_k\in[0,2]$, $z_k\in D$ and two sequences
$\{\f_{k,x}^{(n)}\}$, $\{\f_{k,y}^{(n)}\}$ with
\begin{equation}\label{D22}
\|\f_{k,x}^{(n)}\|\to 0, \ \ \ \|\f_{k,y}^{(n)}\|\to 0 \ \
\textrm{as} \ \ n\to\infty
\end{equation}
 such that
\begin{equation}\label{D23}
T^{k,n}x+\f_{k,x}^{(n)}\geq\l_k z_k, \
T^{k,n}y+\f_{k,y}^{(n)}\geq\l_k z_k, \ \textrm{for all} \ \ n\geq k.
\end{equation}
Due to \eqref{D22} we choose $n_k$ such that
$$
\|\f_{k,x}^{(n_k)}\|\leq\frac{\l_k}{4}, \ \ \
\|\f_{k,y}^{(n_k)}\|\leq \frac{\l_k}{4}.
$$
Therefore, by denoting $\f_{k,x}=\f_{k,x}^{(n_k)}$,
$\f_{k,y}=\f_{k,y}^{(n_k)}$ from \eqref{D23} one finds
\begin{equation*}
T^{k,n_k}x+\f_{k,x}\geq\l_k z_k, \ T^{k,n_k}y+\f_{k,y}\geq\l_k z_k,
\end{equation*}
which yields condition $\frak{D}_1$ on $\frak{N}$.

(iii)$\Rightarrow$ (iv). Fix $k\in\N\cup\{0\}$, and take any two
elements $x,y\in\ck$. Then due Lemma \ref{3.2} one finds $u,v\in\ck$
such that
\begin{equation}\label{L1S12}
x-y=\frac{\|x-y\|}{2}(u-v).
\end{equation}

Since $\frak{N}$ is dense, for any $\i>0$ one can find
$u_1,v_1\in\frak{N}$ such that
\begin{equation}\label{L1S121}
\|u-u_1\|<\i, \ \ \ \|v-v_1\|<\i.
\end{equation}

According to condition $\frak{D}_1$, there exist $z_k\in D$ and
$\l_k\in[0,2]$ such that for those $u_1$ and $v_1$ one can find
$n_k\in\N$ and $\f_{k,u_1},\f_{k,v_1}\in X_+$ with
$\|\f_{k,u_1}\|\leq \frac{\l_k}{4}$, $\|\f_{k,v_1}\|\leq
\frac{\l_k}{4}$  one has
\begin{equation}\label{L1S13}
T^{k,n_k}u_1+\f_{k,u_1}\geq\l_k z_k, \ \
T^{k,n_k}v_1+\f_{k,v_1}\geq\l_k z_k.
\end{equation}

Now denote $\f_k=\f_{k,u_1}+\f_{k,v_1}$, then we have
\begin{equation}\label{L1S14}
\|\f_k\|\leq\frac{\l_k}{2}.
\end{equation}
From \eqref{L1S13} one finds
\begin{eqnarray}\label{L1S15}
&& T^{k,n_k}u_1+\f_{k}\geq\l_k z_k, \\[2mm]
\label{L1S16}
&&
T^{k,n_k}v_1+\f_{k}\geq \l_k z_k.
\end{eqnarray}

Therefore, using Markovianity of $T^{k,n}$, and inequalities
\eqref{L1S15},\eqref{L1S16} with \eqref{L1S14} we obtain
\begin{eqnarray*}
&&\|T^{k,n_k}u_1+\f_{k}-\l_k z_k\|=1-c_1,\ \ \
 \|T^{k,n_k}v_1+\f_{k}-\l_k
z_k\|=1-c_1,
\end{eqnarray*}
where $1-c_1\leq 1-\frac{\l_k}{2}$. Let us denote
\begin{eqnarray*}
u_2=\frac{1}{1-c_1}(T^{k,n_k}u_1+\f_{k}-\l_k z_k),\\[2mm]
v_2=\frac{1}{1-c_1}(T^{k,n_k}v_1+\f_{k}-\l_k z_k).
\end{eqnarray*}
It is clear that $u_2,v_2\in \ck$.

So, one has
\begin{equation}\label{L1S17}
T^{k,n_k}u_1-T^{k,n_k}v_1=(1-c_1)(u_2-v_2).
\end{equation}

Now from \eqref{L1S12} and \eqref{L1S17} we obtain
\begin{eqnarray*}
\|T^{k,n_k}x-T^{k,n_k}y\|&=&\frac{\|x-y\|}{2}\|T^{k,n_k}u-T^{k,n_k}v\|\\[2mm]
&\leq&\frac{\|x-y\|}{2}\big(\|T^{k,n_k}(u-u_1)\|+\|T^{k,n_k}(v-v_1)\|\\[2mm]
&&+\|T^{k,n_k}u_1-T^{k,n_k}v_1\|\big)\\[2mm]
&\leq&\frac{\|x-y\|}{2}\big(2\i+2(1-c_1)\big)\\[2mm]
&\leq&(\i+1-c_1)\|x-y\|\\[2mm]
&\leq & \bigg(\i+1-\frac{\l_k}{2}\bigg)\|x-y\|.
\end{eqnarray*}
Due to the arbitrariness of $\i$  and taking into account
\eqref{Lam1} we get the desired assertion.

(iv)$\Rightarrow$(v). Assume that $x,y\in \ck$ and $k\in\N\cup\{0\}$
are fixed. Due to condition (iv) one can find $\l_k\in[0,1)$ and
$n_0$ such that
\begin{equation}\label{L1-3}
\|T^{k,k+n_0}x-T^{k,k+n_0}y\|\leq \g_k\|x-y\|.
\end{equation}

 Now we claim that there are numbers
$\{n_i\}_{i=0}^m\subset\N$ and
\begin{eqnarray}\label{L1-3g}
\|T^{k,K_m}x-T^{k,K_m}y\|\leq\bigg(\prod_{j=0}^{m-1}\g_{K_j}\bigg)\|x-y\|,
\end{eqnarray}
where $K_0=k, K_{j+1}=k+\sum\limits_{i=0}^jn_i$, $j=0,\dots,m-2$.

Let us prove the inequality \eqref{L1-3g} by induction.

When $m=1$ we have already proved it. Assume that \eqref{L1-3g}
holds at $m$.

Denote $x_m:=T^{k,K_m}x, y_m:=T^{k,K_m}y$. It is clear that
$x_m,y_m\in\ck$. Then one can find $n_{m+1}\in\N$ and
$\g_{K_m}\in[0,1)$ such that
\begin{equation}\label{L1-3gm}
\|T^{K_m,K_m+n_{m+1}}x_m-T^{K_m,K_m+n_{m+1}}y_m\|\leq
\g_{K_m}\|x_m-y_m\|.
\end{equation}

Now using \eqref{L1-3gm} and our assumption one gets
\begin{eqnarray*}
\|T^{k,K_{m+1}}x-T^{k,K_{m+1}}y\|&=&
\big\|T^{K_m,K_{m+1}}(x_m-y_m)\big\|\\
&\leq& \g_{K_m}\|x_m-y_m\|\\
&\leq&\g_{K_m}\bigg(\prod_{j=0}^{m-1}\g_{K_j}\bigg)\|x-y\|.
\end{eqnarray*}

Hence, \eqref{L1-3g} is valid for all $m\in\N$. Take an arbitrary
$\i>$, then due to \eqref{L1-2} one can find $m\in\N$ such that
$\prod_{j=0}^{m}\g_{K_j}<\i$. Take any $n\geq K_m$, then we have
$$
n=K_m+r, \ \ 0\leq r<n_{m+1}
$$
hence from \eqref{L1-3g} one finds
\begin{eqnarray*}
\|T^{k,n}x-T^{k,n}y\|&=&\big\|T^{K_m,n}\big(T^{k,K_m}x-T^{k,K_m}y\big)\big\|\nonumber
\\[2mm]
&\leq &\|T^{k,K_m}x-T^{k,K_m}y\|<\i
\end{eqnarray*}
which implies the $L$-weak ergodicity.

(v)$\Rightarrow$(vi). Let $\{T^{k,n}\}$ be $L$-weak ergodic. Take
any $k\in\N\cup\{0\}$, and fix some element $y_0\in \ck$. Then for
any $x,y\in\ck$ from \eqref{L1wed} one gets
\begin{equation}\label{L1S18}
\|T^{k,n}x-T^{k,n}y_0\|\to 0, \ \ \|T^{k,n}y-T^{k,n}y_0\|\to 0\ \ \
\textrm{as} \ \ n\to\infty.
\end{equation}
Therefore, one can find $n_k\in\N$ such that
\begin{equation}\label{L1S19}
\|T^{k,n_k}x-T^{k,n_k}y_0\|\leq\frac{1}{4}, \ \
\|T^{k,n_k}y-T^{k,n_k}y_0\|\leq \frac{1}{4}.
\end{equation}

 Let us denote
$$
\f_{k,x}=(T^{k,n_k}x-T^{k,n_k}y_0)_-, \ \
\f_{k,y}=(T^{k,n_k}y-T^{k,n_k}y_0)_-,
$$
where
$T^{k,n_k}x-T^{k,n_k}y_0=(T^{k,n_k}x-T^{k,n_k}y_0)_+-(T^{k,n_k}x-T^{k,n_k}v_0)_-$.
From \eqref{L1S19} we obtain
$$
\|\f_{k,x}\|=\|(T^{k,n_k}x-T^{k,n_k}y_0)_-\|\leq
\|T^{k,n_k}x-T^{k,n_k}y_0\|\leq\frac{1}{4}.
$$
Similarly, one finds
$$
\|\f_{k,y}\|\leq\frac{1}{4}.
$$
It is clear that
\begin{eqnarray*}
T^{k,n_k}x+\f_{k,x}&=&T^{k,n_k}y_0+T^{k,n_k}x-T^{k,n_k}y_0+\f_{k,x}\\[2mm]
&=&T^{k,n_k}y_0+(T^{k,n_k}x-T^{k,n_k}y_0)_+\\
 &\geq& T^{k,n_k}y_0.
\end{eqnarray*}
Using the same argument, we have
\begin{eqnarray*}
T^{k,n_k}y+\f_{k,y}\geq T^{k,n_k}y_0.
\end{eqnarray*}
By denoting $\l_k=1$ and $z_k=T^{k,n_k}y_0$, we conclude that the
process $\{T^{k,m}\}$ satisfies condition $\frak{D}_1$ on $\ck$.

The implication (vi)$\Rightarrow$(iii) is obvious. This completes
the proof.
\end{proof}

\begin{rem} We stress that in \cite{SZ0,SZ} the $L$-weak ergodicity was established
for homogeneous Markov chain under a stronger condition than
$\frak{D_2}$ (i.e. the condition $\frak{D_2}$ with $\f_{k,x}\equiv
0$). Note that if $X$ is an $L^1$-space, then analogous theorem has
been proved in \cite{M2}. Moreover, certain concrete examples of
NDMC defined on $L^1$-spaces are provided.
\end{rem}

\begin{cor}\label{L1weq}
Let $(X,X_+,\ck,f)$ be an OBSB and $\{T^{k,n}\}$ be a NDMC on $X$.
Assume that $\{T^{k,n}\}$ satisfies condition $\frak{D}_1$ on $\ck$
with
\begin{equation*}
\a=\liminf_{k\to\infty}\l_{k}>0.
\end{equation*}
Then for each $k\in\N\cup\{0\}$ and $x,y\in\ck$ one can find
$N(k,x,y)\in\br_+$ such that
\begin{equation}\label{D1-est}
\|T^{k,n}x-T^{k,n}y\|\leq
C\bigg(1-\frac{\a}{2}\bigg)^{(n-k)/N(k,x,y)}\|x-y\|,
\end{equation}
where $C$ is a some constant.
\end{cor}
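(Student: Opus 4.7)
My plan is to promote the one-step contraction furnished by Theorem~\ref{L1S1}(iv) to an explicit exponential rate by iterating on a sequence of stopping times, following the scheme of the implication (iv)$\Rightarrow$(v) but keeping precise quantitative control.

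First, by Theorem~\ref{L1S1} the hypothesis $\frak{D}_1$ on $\ck$ forces the contraction property (iv); tracing the implication (iii)$\Rightarrow$(iv) with $\frak{N}=\ck$ (so that the $\i$-approximation step is not needed) yields the explicit contraction factor $\g_k=1-\l_k/2$. Thus for every $k\in\N\cup\{0\}$ and every $x,y\in\ck$ there is an integer $n_0=n_0(k,x,y)$ with
\begin{equation*}
\|T^{k,k+n_0}x-T^{k,k+n_0}y\|\leq\bigg(1-\frac{\l_k}{2}\bigg)\|x-y\|.
\end{equation*}

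Next I would iterate along stopping times $K_0=k$, $K_{j+1}=K_j+n_0(K_j,x_j,y_j)$, where $x_j:=T^{k,K_j}x$ and $y_j:=T^{k,K_j}y$. The same induction used in the proof of (iv)$\Rightarrow$(v) yields
\begin{equation*}
\|T^{k,K_m}x-T^{k,K_m}y\|\leq\prod_{j=0}^{m-1}\bigg(1-\frac{\l_{K_j}}{2}\bigg)\|x-y\|.
\end{equation*}
Because $\a=\liminf_{k\to\infty}\l_k>0$, one has $\l_{K_j}\geq \a$ for all $j$ beyond some $j_0$ (up to an arbitrarily small slack that is absorbed into the final $C$ and $N$). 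Thus each late factor is dominated by $1-\a/2$ while the finitely many early factors are bounded by $1$ and contribute only to the overall constant.

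Setting $N(k,x,y):=\sup_{j\geq 0}(K_{j+1}-K_j)$ one has $K_m\leq k+m\,N(k,x,y)$, so for any $n\geq k$ the unique $m$ with $K_m\leq n<K_{m+1}$ satisfies $m\geq(n-k)/N(k,x,y)-1$. Every Markov operator is a norm contraction on the null space $\{z\in X:f(z)=0\}$ by Theorem~\ref{d-prp}(i), and $f(T^{k,K_m}x-T^{k,K_m}y)=0$ since $T^{k,K_m}$ preserves $f$, so $T^{K_m,n}$ does not enlarge the norm. Combining the three bounds yields
\begin{equation*}
\|T^{k,n}x-T^{k,n}y\|\leq\bigg(1-\frac{\a}{2}\bigg)^{m}\|x-y\|\leq C\bigg(1-\frac{\a}{2}\bigg)^{(n-k)/N(k,x,y)}\|x-y\|,
\end{equation*}
where $C$ is $(1-\a/2)^{-1}$ multiplied by the finite product correction coming from the initial block of indices with $K_j<k_1$.

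\emph{Main obstacle.} The delicate point is justifying that $N(k,x,y)=\sup_j(K_{j+1}-K_j)$ is finite. Condition $\frak{D}_1$ allows $n_0$ to depend on the pair $(x_j,y_j)$, so a priori the successive increments could diverge. The natural fix is to exploit the $L$-weak ergodicity already provided by Theorem~\ref{L1S1}(v): since $\|x_j-y_j\|\to 0$ along the trajectory, a uniform choice of $n_0$ becomes available for all sufficiently large $j$, keeping $\sup_j(K_{j+1}-K_j)$ finite. Packaging this bound into $N(k,x,y)$ then gives the stated estimate~\eqref{D1-est}.
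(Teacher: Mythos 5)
Your main line --- extracting the explicit factor $\g_k=1-\l_k/2$ from the proof of (iii)$\Rightarrow$(iv) of Theorem~\ref{L1S1} with $\frak{N}=\ck$ (so the $\i$-approximation is vacuous), iterating to reach the product bound \eqref{L1-3g}, and converting the number $m$ of completed blocks into $(n-k)/N$ --- is exactly the paper's route: its proof consists of the single sentence that the estimate follows from \eqref{L1-3g}. Everything up to your ``main obstacle'' is the intended argument, including the absorption of the $\liminf$ slack into $N$ and $C$ and the use of $\d(T^{K_m,n})\leq 1$ on the null space of $f$ to handle the leftover segment $n-K_m$.

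The obstacle you flag is, however, genuine, and your proposed fix does not close it. Condition $\frak{D}_1$ (equivalently, property (iv) of Theorem~\ref{L1S1}) produces $n_0=n_0(K_j,x_j,y_j)$ afresh at each stage, and nothing bounds these increments uniformly in $j$; if $K_{j+1}-K_j\to\infty$, the number of contraction blocks completed by time $n$ grows sublinearly in $n-k$, the decay is subexponential, and no finite $N(k,x,y)$ can make \eqref{D1-est} hold. Your remedy --- that $\|x_j-y_j\|\to 0$ forces a uniformly bounded $n_0$ for large $j$ --- does not follow from anything available: the inequality in (iv) is homogeneous in $\|x_j-y_j\|$ (the contraction is measured \emph{relative} to $\|x_j-y_j\|$), so making the pair close does not shorten the waiting time, and $L$-weak ergodicity supplies convergence with no rate whatsoever. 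To be fair, the paper is equally silent here: its one-line proof implicitly sets $N(k,x,y):=\sup_j(K_{j+1}-K_j)$ and treats finiteness of that supremum as part of what the symbol $N(k,x,y)$ encodes. So you have reproduced the paper's argument and correctly located its weak joint, but the patch you offer would not survive scrutiny; a complete proof needs either a strengthened hypothesis (e.g.\ an $n_k$ in $\frak{D}_1$ independent of the pair, as in condition $\frak{D}$) or a genuinely new idea controlling the increments along the trajectory.
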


The proof immediately follows from the estimation \eqref{L1-3g}.

\begin{cor}\label{L1weq}
Let $(X,X_+,\ck,f)$ be an OBSB and $\{T^{k,n}\}$ be a NDMC on $X$.
If for each $k$ there exist $z_k\in \ck$ and a constant
$\l_k\in[0,2]$, and for every $x\in\ck$, one can find an
$\f_{k,x}\in X_+$ with $\|\f_{k,x}\|\leq \l_k/4$ such that
\begin{equation}\label{D1}
T^{k,k+1}x+\f_{k,x}\geq\l z_k
\end{equation}
with \eqref{Lam1}. Then $\{T^{k,n}\}$ is the $L$-weak ergodic.
\end{cor}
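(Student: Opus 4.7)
The plan is to recognize that the hypothesis of the corollary is essentially the one-step specialization of condition $\frak{D}_1$, and then invoke Theorem \ref{L1S1}. Nothing new has to be proved beyond this reduction.

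First I would verify that the hypothesis implies condition $\frak{D}_1$ on the whole base $\ck$ with the constant choice $n_k=1$ for every $k$. Given arbitrary $x,y\in\ck$ and a fixed index $k$, I apply the assumption twice: once to $x$ to obtain $\f_{k,x}\in X_+$ with $\|\f_{k,x}\|\leq \l_k/4$ and $T^{k,k+1}x+\f_{k,x}\geq \l_k z_k$, and once to $y$ to obtain $\f_{k,y}\in X_+$ with $\|\f_{k,y}\|\leq \l_k/4$ and $T^{k,k+1}y+\f_{k,y}\geq \l_k z_k$. The element $z_k\in\ck$ and constant $\l_k\in[0,2]$ are the same for all choices of points (they are independent of $x$ in the hypothesis), so they serve uniformly. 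Combined with the assumed condition \eqref{Lam1} on $\{\l_k\}$, this is precisely condition $\frak{D}_1$ on $\frak{N}=\ck$ with $n_k\equiv 1$.

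Having established condition $\frak{D}_1$ on $\ck$, I invoke the implication (vi)$\Rightarrow$(v) of Theorem \ref{L1S1}, which directly yields that $\{T^{k,n}\}$ is $L$-weak ergodic. (Alternatively, one could quote the quantitative estimate from Corollary \ref{L1weq} with $N(k,x,y)=1$ to obtain the stronger conclusion that
\begin{equation*}
\|T^{k,n}x-T^{k,n}y\|\leq C\bigg(1-\frac{\a}{2}\bigg)^{n-k}\|x-y\|,
\end{equation*}
where $\a=\liminf_{k\to\infty}\l_k>0$, which in particular gives $L$-weak ergodicity.)

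There is no substantive obstacle here: the only content is the observation that a pointwise, one-step domination of the form $T^{k,k+1}x+\f_{k,x}\geq \l_k z_k$ (with $z_k$ and $\l_k$ not depending on $x$) can be applied simultaneously to any two points of $\ck$, which is exactly what condition $\frak{D}_1$ with $n_k=1$ requires. The rest is a direct citation of the previously proved Theorem \ref{L1S1}.
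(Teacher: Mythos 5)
Your reduction is correct and is exactly what the paper intends: the hypothesis is condition $\frak{D}_1$ on $\ck$ with the time step equal to one (apply the pointwise bound to $x$ and to $y$ separately, noting that $z_k$ and $\l_k$ do not depend on the point), and then Theorem \ref{L1S1} (vi)$\Rightarrow$(v) gives $L$-weak ergodicity. The paper offers no further argument for this corollary, so your proof matches its (implicit) one; your quantitative remark via the preceding corollary is a harmless bonus.
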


It turns out that the $L$-strong ergodicity implies condition
$\frak{D}_2$. Namely, one has

\begin{thm}\label{L1r1}
Let $(X,X_+,\ck,f)$ be an OBSB and $\{T^{k,n}\}$ be a NDMC on $X$.
If $\{T^{k,m}\}$ is the $L$-strong ergodic, then it
 satisfies condition $\frak{D}_2$ on $\ck$.
\end{thm}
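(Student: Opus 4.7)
The plan is to take $z_k := y_0$ (the common limit guaranteed by $L$-strong ergodicity) and $\lambda_k := 1$ for every $k$, and then extract the required nonnegative perturbations $\varphi^{(n)}_{k,x}$ from the canonical positive decomposition of $T^{k,n}x - y_0$. This is essentially a replay of the positive/negative-part trick already used in the (ii)$\Rightarrow$(i) direction of Theorem \ref{N2} and the (v)$\Rightarrow$(vi) direction of Theorem \ref{L1S1}, but now with quantitative smallness coming directly from the $L$-strong ergodicity instead of merely being chosen once.

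More precisely, first I would fix $k \in \mathbb{N} \cup \{0\}$ and $x \in \ck$. For every $n \geq k$ one has $T^{k,n}x \in \ck$ and $y_0 \in \ck$, so $f(T^{k,n}x - y_0) = 0$, i.e.\ $T^{k,n}x - y_0 \in N$. By Proposition II.1.14 of \cite{Alf} (equivalently, the argument of Lemma \ref{3.2}), there exist $a_n, b_n \in X_+$ with
\[
T^{k,n}x - y_0 = a_n - b_n, \qquad \|T^{k,n}x - y_0\| = \|a_n\| + \|b_n\|.
\]
Then I would set $\varphi^{(n)}_{k,x} := b_n$. Clearly $\varphi^{(n)}_{k,x} \in X_+$, and
\[
T^{k,n}x + \varphi^{(n)}_{k,x} = y_0 + a_n \geq y_0 = \lambda_k z_k,
\]
which is exactly the inequality \eqref{D2} required by condition $\mathfrak{D}_2$.

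To verify the asymptotic smallness, I would note $\|\varphi^{(n)}_{k,x}\| = \|b_n\| \leq \|a_n\| + \|b_n\| = \|T^{k,n}x - y_0\| \to 0$ as $n \to \infty$, where the last limit is the $L$-strong ergodicity assumption. Finally, $\lambda_k \equiv 1$ trivially satisfies $\liminf_{k \to \infty} \lambda_k = 1 > 0$, so \eqref{Lam1} holds. This gives $\mathfrak{D}_2$ on all of $\ck$.

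There is essentially no genuine obstacle here: the argument is a direct application of the base decomposition combined with the definition of $L$-strong ergodicity. The only subtlety to keep in mind is that the norm bound $\|\varphi^{(n)}_{k,x}\| \leq \lambda_k/4$ appearing in the original statement of $\mathfrak{D}_1$ is \emph{not} required in the definition of $\mathfrak{D}_2$ (which only asks $\|\varphi^{(n)}_{k,x}\| \to 0$), so no extra tail-cutoff step is needed; one really does get the bound uniformly in $n$ as a vanishing sequence, just from $\|T^{k,n}x - y_0\| \to 0$.
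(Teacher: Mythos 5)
Your proof is correct and is essentially identical to the paper's own argument: the paper also takes $z_k=y_0$, $\l_k=1$, and sets $\f_{k,x}^{(n)}=(T^{k,n}x-y_0)_-$, i.e.\ exactly the negative part $b_n$ from the decomposition of Proposition II.1.14 of \cite{Alf}, with the same norm estimate $\|\f_{k,x}^{(n)}\|\leq\|T^{k,n}x-y_0\|\to 0$. No differences worth noting.
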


\begin{proof} Take any $k\geq 0$ and fix arbitrary $x\in \ck$.
Then from the $L$-strong ergodicity one gets
\begin{equation}\label{L1r3}
\lim_{n\rightarrow\infty}\|T^{k,n}x-y_0\|=0,
\end{equation}
since $T_{y_0}x=y_0$.  Denote
\begin{equation*}
\f_{k,x}^{(n)}=(T^{k,n}x-y_0)_-.
\end{equation*}
From \eqref{L1r3} we obtain
$$
\|\f_{k,x}^{(n)}\|=\|(T^{k,n}x-y_0)_-\|\leq \|T^{k,n}x-{y_0}\|\to 0
\ \ \textrm{as} \ \ n\to\infty.
$$
It is clear that
$$
T^{k,n}x+\f_{k,x}^{(n)}=y_0+T^{k,n}x-y_0+\f_{k,x}^{(n)}=y_0+(T^{k,n}x-{y_0})_+\geq
y_0
$$
This implies that condition $\frak{D}_2$ is satisfied on $\ck$.
\end{proof}

\begin{rem} Note that in \cite{SG} it was proved that if
$\{T^{k,m}\}$ is a homogeneous Markov chain on a von Neumann
algebra, then condition $\frak{D}_2$ implies the $L$-strong
ergodicity, i.e. these two notions are equivalent. We should also
stress that this equivalence is not valid for general ordered Banach
spaces with a base (see \cite{SZ0}). \end{rem}

\section{Examples}

We first note that certain concrete examples of NDMC defined on
$L^1$-spaces are provided in \cite{M2}. Therefore, in this section
we are going to provide several examples of NDMC defined on OBSB
different from $L^1$, and satisfy the condition $\frak{D}_1$.

{\bf 1.} Let $X=C[0,1]$ be the space of real-valued continuous
functions on $[0,1]$. Denote
$$
X_+=\big\{x\in X: \ \max_{0\leq t\leq 1}|x(t)-x(1)|\leq 2
x(1)\big\}.
$$
Then $X_+$ is a generating cone for $X$, and $f(x)=x(1)$ is a
strictly positive linear functional. Then $\ck=\{x\in X_+: \
f(x)=1\}$ is a base corresponding to $f$. One can check that the
base norm $\|x\|$ is equivalent to the usual one
$\|x\|_{\infty}=\max\limits_{0\leq t\leq 1}|x(t)|$. Due to
closedness of $X_+$ we conclude that $(X,X_+,\ck,f)$ is a OBSB (see
\cite{SZ0}).

To define a NDMC $\{T^{k,m}\}$ is enough to provide a sequence of
Markov operators $\{T_k\}_{k=1}^\infty$ and in this case one has
$$
T^{k,m}=T_m\cdots T_k, \ \ k<m.
$$

For each $k\in\bn$, we now consider the following operator $T_k:X\to
X$ given by
$$(T_kx)(t)=t^kx(t).
$$
It is clear that $T_k$ is a
Markov operator on $X$ for every $k\in\bn$.

Let us show that the defined NDMC $\{T^{k,m}\}$ satisfies the
condition $\frak{D}_1$. Take any $k\in\bn$ and $x,y\in\ck$. Put
$\f_{k,x}\equiv 0$, $\l_k=1$ and $z_k=c$, $c\in(0,1/2)$. Then the
inequalities $T^{k,k+N}x\geq \l_k z_k$, $T^{k,k+N}y\geq \l_k z_k$
are equivalent to $T^{k,k+N}x-\l_k z_k,T^{k,k+N}y-\l_k z_k\in X_+$,
which mean
\begin{eqnarray*}
&&\max_{0\leq t\leq 1}|(T^{k,k+N}x)(t)-(T^{k,k+N}x)(1)|\leq
2\big((T^{k,k+N}x)(1)-z_k\big), \\[2mm]
&&\max_{0\leq t\leq 1}|(T^{k,k+N}y)(t)-(T^{k,k+N}y)(1)|\leq
2\big((T^{k,k+N}y)(1)-z_k\big).
\end{eqnarray*}
The last one can be rewritten as follows:
\begin{eqnarray*}
&&\max_{0\leq t\leq 1}|t^{(k+N/2)(N+1)}x(t)-x(1)|\leq 2(x(1)-c),
\\[2mm]
&&
 \max_{0\leq
t\leq 1}|t^{(k+N/2)(N+1)}y(t)-y(1)|\leq 2(y(1)-c).
\end{eqnarray*}
 Hence, taking
into account $x,y\in\ck$ one gets
\begin{eqnarray}\label{1cc}
&&\max_{0\leq t\leq 1}\big|t^{(k+N/2)(N+1)}x(t)-1\big|\leq
2(1-c),\\[2mm]\label{2cc}
&&\max_{0\leq t\leq 1}\big|t^{(k+N/2)(N+1)}y(t)-1\big|\leq 2(1-c) .
\end{eqnarray}
One can see that the existence of $N=N_k(x,y)$ such that
\eqref{1cc},\eqref{2cc} are satisfied. Thus, NDMC is satisfied the
condition $\frak{D}_1$, so due to Theorem \ref{L1S1} the defined
NDMC is $L$-weak ergodic.

Note that this chain is not $L$-strong ergodic. Moreover, it shows
that a main result of \cite[Theorem 2.1]{Ber} is false.

{\bf 2.} Let $L_p[0,1]$ ($1<p<\infty$) be the usual $L_p$-space. Let
$$X=\{(\a,{\mathbf{x}}): \ \a\in\br, \ {\mathbf{x}}\in
L_p[0,1]\}.$$ Define
$$X_+=\{(\a,{\mathbf{x}})\in X: \ \|{\mathbf{x}}\|_p\leq\a\}
$$
and $f(\a,{\mathbf{x}})=\a$. It is clear that $f$ is a strictly
positive linear functional on $X$. Put $\ck=\{(\a,{\mathbf{x}})\in
X_+: \ f(\a,{\mathbf{x}})=1\}$. One can check that $(X,X_+,\ck,f)$
is a OBSB. Now consider the following operator $T:X\to X$ defined by
\begin{equation}\label{tt1}
T(\a,{\mathbf{x}})=\bigg(\a,\a g(t)+\int K(s,t)x(s)ds\bigg), \ \
(\a,{\mathbf{x}})\in X.
\end{equation}
It is clear that $T$ is a linear operator. Let us find some
conditions to ensure its makovianity. Let $(\a,{\mathbf{x}})\in\ck$,
then $\a=1$ and $\|\mathbf{x}\|_p\leq 1$. From
$$
T(1,{\mathbf{x}})=\bigg(1,g(t)+\int K(s,t)x(s)ds\bigg).
$$
we conclude that the markoviaity is equivalent to
\begin{equation}\label{tt12}
\bigg\|g(t)+\int K(s,t)x(s)ds\bigg\|_p\leq 1.
\end{equation}
One can check that \begin{eqnarray}\label{tt2} \bigg\|g(t)+ \int
K(s,t)x(s)ds\bigg\|_p^p&=&\int\bigg|g(t)+\int K(s,t)x(s)ds\bigg|^p
dt\nonumber \\[2mm]
&\leq & \int\bigg(|g(t)|+\int |K(s,t)x(s)|ds\bigg)^p dt
\nonumber\\[2mm]
&\leq & \int\bigg(|g(t)|+\bigg(\int |K(s,t)|^q
ds\bigg)^{1/q}\|\mathbf{x}\|_p\bigg)^p dt
\nonumber\\[2mm]
&\leq & \int\bigg(|g(t)|+\bigg(\int |K(s,t)|^q
ds\bigg)^{1/q}\bigg)^p dt.
\end{eqnarray}
where $1/p+1/q=1$. Hence, the last inequality with \eqref{tt12}
yields that
\begin{equation}\label{t-Mar}
\int\bigg(|g(t)|+\bigg(\int |K(s,t)|^q ds\bigg)^{1/q}\bigg)^p dt\leq
1
\end{equation}
which implies the markovianity of $T$.

Now we choose $g(t)$ such a way that $z=(1,2g(t))\in\ck$ (this means
$\|g\|_p\leq 1/2$) and for an arbitrary $x=(1,\mathbf{x})\in\ck$ one
holds $Tx\geq \frac{1}{2}z$, i.e. from \eqref{tt1} the last
inequality is equivalent to
$$
\bigg(\frac{1}{2},\int K(s,t)x(s)ds\bigg)\geq 0.
$$
This means
$$
\bigg\|\int K(s,t)x(s)ds\bigg\|_p\leq\frac{1}{2}.
$$
It is clear that the last one is satisfied if one has
\begin{equation}\label{t-Mar1}
\int\bigg(\int |K(s,t)|^q ds\bigg)^{p/q} dt\leq
\bigg(\frac{1}{2}\bigg)^p.
\end{equation}

From the well-known inequality $(|a|+|b|)^p\leq
2^{p-1}(|a|^p+|b|^p)$ we immediately find that the inequalities
$\|g\|_p\leq 1/2$ and \eqref{t-Mar1} imply \eqref{t-Mar}.

 Hence, we conclude that if $\|g\|_p\leq 1/2$ and \eqref{t-Mar1}
are satisfied then the homogeneous Markov chain $\{T^n\}$ due to
Theorem \ref{N2} is weak ergodic.

Now we modify the provided construction to obtain NDMC. To do so, it
is enough to define a sequence $\{T_k\}$ of Markov operators. Let us
define  $T_k:X\to X$ as follows
\begin{equation}\label{tt1}
T_k(\a,{\mathbf{x}})=\bigg(\a,\a g_k(t)+\int H_k(s,t)x(s)ds\bigg).
\end{equation}

If we have
\begin{eqnarray}\label{t-Mar1-K}
&&\int\bigg(\int |H_k(s,t)|^q ds\bigg)^{p/q} dt\leq
\bigg(\frac{1}{2}\bigg)^p, \\[2mm] \label{t-Mar1-g}
&& \int|g_k(t)|^p dt\leq \bigg(\frac{1}{2}\bigg)^p, \ \  \ k\in\bn,
\end{eqnarray}
then for every $k\in\bn$ one find $z_k=(1,2g_k)\in\ck$ such that
$$
T_kx\geq \frac{1}{2}z_k, \ \ \textrm{for all}\ \ x\in\ck.
$$
This according to Corollary \ref{L1weq} implies that the NDMC
constructed by the sequence $\{T_k\}$ is $L$-weak ergodic.

Now let us consider a more concrete example. Take $p=2$ and
\begin{equation}\label{Hg}
H_k(s,t)=a_kt^{k/2}s^{k/2}, \ \ g_k(t)=b_kt^k.
\end{equation}
From
the estimation \eqref{t-Mar1-K} we find
$$
\int\int |a_k|^2t^k s^k ds dt\leq \frac{1}{4}
$$
which yields
\begin{equation}\label{aa}
|a_k|\leq \frac{k+1}{2}.
\end{equation}
Similarly, from \eqref{t-Mar1-g} we obtain
\begin{equation}\label{bb}
|b_k|\leq \frac{\sqrt{2k+1}}{2}.
\end{equation}
Consequently, if \eqref{aa},\eqref{bb} are satisfied then the NDMC
$\{T^{m,k}\}$ generated by the operators $T_k$ corresponding to
\eqref{Hg} is $L$-weak ergodic.

\section*{Acknowledgments} The author acknowledges the MOHE grant
ERGS13-024-0057 and the Junior Associate scheme of the Abdus Salam
International Centre for Theoretical Physics, Trieste, Italy.


\end{document}